\newtheorem{theorem}{Theorem}
\newtheorem{corollary}[theorem]{Corollary}
\newtheorem{definition}[theorem]{Definition}
\newenvironment{proof}[1][Proof]{\textbf{#1.} }{\ \rule{0.5em}{0.5em}}
\begin{document}

\title{Binary nullity, Euler circuits and interlace polynomials}
\author{Lorenzo Traldi\\Department of Mathematics, Lafayette College\\Easton, PA\ 18042 USA}
\date{}
\maketitle
\begin{abstract}
A theorem of Cohn and Lempel [J. Combin. Theory Ser. A \textbf{13} (1972),
83-89] gives an equality relating the number of circuits in a directed circuit
partition of a 2-in, 2-out digraph to the $GF(2)$-nullity of an associated
matrix. This equality is essentially equivalent to the relationship between
directed circuit partitions of 2-in, 2-out digraphs and vertex-nullity
interlace polynomials of interlace graphs. We present an extension of the
Cohn-Lempel equality that describes arbitrary circuit partitions in
(undirected) 4-regular graphs. The extended equality incorporates topological
results that have been of use in knot theory, and it implies that if $H$ is
obtained from an interlace graph by attaching loops at some vertices then the
vertex-nullity interlace polynomial $q_{N}(H) $ is essentially the generating
function for certain circuit partitions of an associated 4-regular graph.

2000 \textit{Mathematics Subject Classification}. 05C50, 57M25

\textit{Key words and phrases}. circuit partition, Euler circuit, interlace,
link polynomial, nullity, permutation
\end{abstract}

\section{Introduction}

Cohn and Lempel \cite{CL} gave a simple formula relating the number of orbits
in a finite set under a certain kind of permutation to the nullity of an
associated binary matrix. Let $\sigma$ be the cyclic permutation
$\sigma=(1...m)$ of the set $\{1,...,m\}$, let $\sigma_{1},...,\sigma_{k}$ be
pairwise disjoint transpositions of elements of $\{1,...,m\}$, and let
$\pi=\sigma\sigma_{1}...\sigma_{k}$. Let $I_{\pi}$ be the symmetric $k\times
k$ matrix over $GF(2)$ with $(I_{\pi})_{ij}=1$ if and only if $\sigma
_{i}=(ab)$ and $\sigma_{j}=(cd)$ with either $a<c<b<d$ or $c<a<d<b$.

\begin{theorem}
\label{cle}(Cohn-Lempel equality) The number of orbits in $\{1,...,m\}$ under
$\pi=\sigma\sigma_{1}...\sigma_{k}$ is $1+\nu(I_{\pi})$, where $\nu(I_{\pi})$
is the $GF(2)$-nullity of $I_{\pi}$.
\end{theorem}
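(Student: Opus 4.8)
The plan is to pin down the kernel of $I_{\pi}$ directly, so that the asserted formula turns into the statement that $\pi$ has exactly $1+\dim_{GF(2)}\ker(I_{\pi})$ orbits. It is convenient to read the data off a picture: put $1,\dots,m$ in cyclic order on a circle and, for each $j$, draw a chord $\gamma_{j}$ joining the two elements transposed by $\sigma_{j}$, so that $(I_{\pi})_{ij}=1$ says precisely that $\gamma_{i}$ and $\gamma_{j}$ cross. Every orbit $C$ of $\pi=\sigma\sigma_{1}\cdots\sigma_{k}$ is then traced by a closed curve which runs counterclockwise along the circle and, each time it reaches an endpoint of some $\sigma_{j}$, jumps along $\gamma_{j}$ to the other endpoint before continuing; so $C$ travels along $\gamma_{j}$ once for each of the two elements of $\sigma_{j}$ that it contains.

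First I would attach to each orbit $C$ the vector $z^{C}\in GF(2)^{k}$ whose $j$th entry is $1$ exactly when $C$ contains exactly one element of $\sigma_{j}$, and show $z^{C}\in\ker(I_{\pi})$. Modulo $2$ one has $(I_{\pi}z^{C})_{i}=\sum_{j}(\text{number of elements of }\sigma_{j}\text{ in }C)\,[\gamma_{i}\text{ crosses }\gamma_{j}]$, and the right-hand side is the number of transversal intersections of the closed curve $C$ with the chord $\gamma_{i}$: the stretches where $C$ runs along $\gamma_{i}$ itself may be drawn just beside $\gamma_{i}$ and cross it nowhere, while a stretch along $\gamma_{j}$ with $j\neq i$ crosses $\gamma_{i}$ iff $\gamma_{i}$ and $\gamma_{j}$ cross. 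A chord is a properly embedded arc, so it separates the disc and the closed curve $C$ meets it an even number of times; hence $(I_{\pi}z^{C})_{i}=0$.

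Next I would show the $z^{C}$ span $\ker(I_{\pi})$ with only the relation $\sum_{C}z^{C}=0$. Spanning: given $z\in\ker(I_{\pi})$ with support $Z$, let $U\subseteq\{1,\dots,m\}$ be the symmetric difference of arcs, one cut off by each chord $\gamma_{j}$ with $j\in Z$. As one steps around the circle the indicator of $U$ flips exactly at the endpoints of chords in $Z$; the hypothesis $z\in\ker(I_{\pi})$ — that each $\gamma_{\ell}$ crosses an even number of the chords in $Z$ — says each open arc cut off by a chord $\gamma_{\ell}$ contains an even number of endpoints of chords in $Z$, and this is exactly the parity that keeps the indicator of $U$ constant along the chord-steps of $\pi$ (it is automatically constant along the other steps), so $\pi(U)=U$. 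Hence $U$ is a union of orbits, $z^{U}=\sum_{C\subseteq U}z^{C}$, and checking coordinates gives $z^{U}_{j}=z_{j}+(I_{\pi}z)_{j}=z_{j}$, so $z\in\operatorname{span}\{z^{C}\}$. Relations: $\sum_{C}z^{C}=0$ since each point lies in one orbit, and if $\sum_{C\in T}z^{C}=0$ for a set $T$ of orbits then $\bigcup T$ meets each $\sigma_{j}$ in an even number of points, hence is invariant under $\sigma_{1}\cdots\sigma_{k}$ and under $\pi$, hence under $\sigma=\pi(\sigma_{1}\cdots\sigma_{k})^{-1}$; since $\sigma$ is an $m$-cycle, $\bigcup T$ is empty or everything and $T$ is trivial. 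Therefore, if $\pi$ has $s$ orbits, $\dim\ker(I_{\pi})=s-1$, i.e.\ $s=1+\nu(I_{\pi})$.

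The hard part, I expect, is proving the $z^{C}$ exhaust $\ker(I_{\pi})$: extracting a genuine $\pi$-invariant subset of $\{1,\dots,m\}$ from an abstract kernel vector, and making the ``a closed curve meets a separating arc evenly'' bookkeeping rigorous (the self-crossings of $C$ and the behaviour at chord endpoints need care). A more pedestrian alternative is induction on $k$: deleting $\sigma_{k}=(ab)$ and writing $\pi'=\sigma\sigma_{1}\cdots\sigma_{k-1}$, bordering the symmetric zero-diagonal matrix $I_{\pi'}$ to get $I_{\pi}$ changes $\nu$ by $\pm1$ according as the new column lies in the column space of $I_{\pi'}$ or not (the zero diagonal, via $x^{T}I_{\pi'}x=0$, is what makes the $+1$ case always the attainable one), and composing $\pi'$ with $(ab)$ changes the orbit count by $\pm1$ according as $a,b$ lie in one orbit of $\pi'$ or two; these dichotomies agree because $a$ and $b$ lie in a common orbit of $\pi'$ precisely when the interlacement vector of $\sigma_{k}$ lies in the column space of $I_{\pi'}$, and this last equivalence follows from the same even-intersection count with $\gamma_{k}$ as the separating arc together with the fact that the $z^{C}$ for $\pi'$ span $\ker(I_{\pi'})$.
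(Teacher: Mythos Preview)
Your argument is correct and takes a genuinely different route from the paper's. The paper proves the extended version (Theorem~\ref{ecle}) and recovers Theorem~\ref{cle} as the zero-diagonal special case; that proof is inductive on the size of $I_P$, replacing the Euler circuit $C$ by the pivot $C*a*b*a$ at an interlaced pair $a,b$ so that $P$ follows the new circuit at both $a$ and $b$, and checking via a block computation that the nullity is preserved while the matrix shrinks by two rows and columns. Your primary approach instead pins down $\ker(I_\pi)$ explicitly as the span of the orbit vectors $z^C$ subject to the single relation $\sum_C z^C=0$; once the chord endpoints are placed in the gaps between consecutive integers (so that stepping from $x$ to $x+1$ flips the indicator of $U$ exactly when $x$ is moved by some $\sigma_j$ with $j\in Z$), the ``closed curve meets a separating arc evenly'' bookkeeping telescopes cleanly, and both $z^C\in\ker(I_\pi)$ and the $\pi$-invariance of $U$ reduce to the identity $f(a_\ell)+f(b_\ell+1)\equiv(I_\pi z)_\ell$. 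This is closer in spirit to the homological arguments of Zulli and Lando that the paper cites, and it yields more than the bare equality: a canonical description of the kernel indexed by orbits. The paper's pivot argument, on the other hand, is what generalises painlessly to the looped matrices of Theorem~\ref{ecle}, since a single $\kappa$-transform $C*a$ absorbs a diagonal $1$ in one step. Your ``pedestrian alternative'' is also sound, but note that as written it is not independent of the primary argument: matching the two $\pm1$ dichotomies invokes the spanning of $\ker(I_{\pi'})$ by the $z^C$, which is exactly the hard step of the first proof.
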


The Cohn-Lempel equality was reproven by Moran \cite{M} and Stahl \cite{S}. It
was extended to non-disjoint transpositions $\sigma_{1},...,\sigma_{k}$ by
Beck and Moran \cite{Be, BM}, who also pointed out that an equivalent equality
was obtained much earlier by Brahana \cite{Br}. Other related results have
been presented by Macris and Pul\'{e} \cite{MP}, Lauri \cite{Lau} and Jonsson
\cite{Jo}.

\medskip

Suppose $D$ is a connected 2-in, 2-out digraph with $V(D)=\{v_{1},...,v_{n}\}
$ and $E(D)=\{e_{1},...,e_{m}\}$; $D$ may have loops or multiple edges. A
\textit{directed trail} in $D$ is described by a sequence $v_{j_{1}}e_{j_{1}%
}v_{j_{2}}e_{j_{2}}...v_{j_{c}}e_{j_{c}}v_{j_{c+1}}$ of vertices and pairwise
distinct edges such that each $e_{j_{k}}$ is directed from $v_{j_{k}}$ to
$v_{j_{k+1}}$; a trail may also be described by its sequence of edges. If
$v_{j_{c+1}}=v_{j_{1}}$ the trail is a \textit{circuit}; the same circuit is
described if the sequence is permuted cyclically, with the natural notation
changes at the ends. $D$ must have a directed Euler circuit, \textit{i.e.}, a
directed circuit that includes every edge. We presume the edges are indexed so
that $e_{1}...e_{m}$ is an Euler circuit, which we denote $C$. A partition $P$
of $E(G)$ into directed circuits is associated to a permutation $\pi_{P}$ of
$\{1,...,m\}$, with $i\pi_{P}=j$ if $e_{i}$ is followed immediately by $e_{j}$
in one of the directed circuits of $P$. The elements of $P$ correspond to the
orbits in $\{1,...,m\} $ under $\pi_{P}$. $P$ may also be specified by giving
the subset $S_{P}\subseteq V(D)$ consisting of the vertices at which the
incident circuit(s) of $P$ do not follow the same edge-to-edge
\textit{transitions} as $C$ \cite{K}. If the edges directed into such a vertex
$v$ are $e_{a-1}$ and $e_{b-1}$, and those directed outward are $e_{a}$ and
$e_{b}$, then saying that the incident circuit(s) of $P$ do not follow the
same transitions as $C$ means that $C$ is $e_{a-1}e_{a}...e_{b-1}e_{b}...$ and
the incident circuit(s) of $P$ are $e_{a-1}e_{b}...$ and $...e_{b-1}e_{a} $.
(Obvious changes in indexing may be required if any of these edges is a loop
or if $1\in\{a,b\}$.) The permutation $\pi_{P}$ is then of the form
$\sigma\sigma_{1}...\sigma_{k}$, with a transposition $\sigma_{i}$ associated
to each $v\in S_{P}$; if $e_{a}$ and $e_{b}$ are the edges directed outward
from $v$ then $\sigma_{i}$ is $(ab)$.

\medskip

Following \cite{RR}, let $I(D,C)$ be the \textit{interlace matrix} of $D$ with
respect to $C$: the $n\times n$ matrix over $GF(2)$ whose $ij$ entry is 1 if
and only if $i\neq j$ and $v_{i}$ and $v_{j}$ are \textit{interlaced} in $C$,
\textit{i.e.}, when we follow $C$ starting at $v_{i}$ we encounter $v_{j}$,
then $v_{i}$, then $v_{j}$ again before finally returning to $v_{i}$. If $P$
is a directed circuit partition of $D$ then $I_{\pi_{P}}$ is simply the
submatrix of $I(D,C)$ that involves the rows and columns corresponding to
elements of $S_{P}$.

\medskip

\begin{corollary}
\label{clec}Let $D$ be a connected 2-in, 2-out digraph, and let $\mathcal{P}%
(D)$ be the set of partitions of $E(D)$ into directed circuits. Let $I(D,C)$
be the interlace matrix corresponding to an Euler circuit $C$ of $D$, and for
each subset $S\subseteq V(D)$ let $I_{S}(D,C)$ be the submatrix of $I(D,C)$
that involves the rows and columns corresponding to elements of $S$. Then
\[
\sum_{P\in\mathcal{P}(D)}(y-1)^{\left|  P\right|  -1}=\sum_{S\subseteq
V(D)}(y-1)^{\nu(I_{S}(D,C))}.
\]
\end{corollary}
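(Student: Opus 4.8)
The plan is to derive Corollary~\ref{clec} from the Cohn-Lempel equality (Theorem~\ref{cle}) by summing that equality over all directed circuit partitions of $D$ and then reindexing the resulting sum. Fix the Euler circuit $C=e_{1}\ldots e_{m}$. The first step is to check that $P\mapsto S_{P}$ is a bijection from $\mathcal{P}(D)$ onto the collection of all subsets of $V(D)$. Indeed, at each vertex of a 2-in, 2-out digraph there are exactly two directed transition systems (each is a bijection from the two incoming edges to the two outgoing edges, and there are precisely two such bijections), one of which is the transition system followed by $C$; and conversely, any assignment of a directed transition system to every vertex determines a unique partition of $E(D)$ into directed circuits, obtained simply by following the chosen transitions. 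Hence choosing which vertices have transitions different from those of $C$ --- that is, choosing an arbitrary $S\subseteq V(D)$ --- amounts to choosing a member $P$ of $\mathcal{P}(D)$, with $S=S_{P}$.

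The second step is to rewrite $|P|-1$ as a nullity. As recalled just before the statement, for $P\in\mathcal{P}(D)$ the permutation $\pi_{P}$ has the form $\sigma\sigma_{1}\ldots\sigma_{k}$ with $\sigma=(1\ldots m)$ and one transposition $\sigma_{i}=(a_{i}b_{i})$ for each vertex in $S_{P}$, where $e_{a_{i}}$ and $e_{b_{i}}$ are the edges directed out of that vertex. Distinct vertices of a 2-in, 2-out digraph have disjoint pairs of outgoing edges, so the $\sigma_{i}$ are pairwise disjoint and Theorem~\ref{cle} applies: the number of orbits of $\pi_{P}$ in $\{1,\ldots,m\}$ equals $1+\nu(I_{\pi_{P}})$. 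Since the circuits of $P$ are precisely these orbits, $|P|=1+\nu(I_{\pi_{P}})$, and since $I_{\pi_{P}}=I_{S_{P}}(D,C)$ (again as noted above), we obtain $|P|-1=\nu(I_{S_{P}}(D,C))$ for every $P\in\mathcal{P}(D)$.

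Finally, summing the identity $(y-1)^{|P|-1}=(y-1)^{\nu(I_{S_{P}}(D,C))}$ over $P\in\mathcal{P}(D)$ and then replacing the summation variable $P$ by $S=S_{P}$ using the bijection of the first step gives
\[
\sum_{P\in\mathcal{P}(D)}(y-1)^{|P|-1}=\sum_{S\subseteq V(D)}(y-1)^{\nu(I_{S}(D,C))},
\]
which is the assertion of the corollary. I do not expect a genuine obstacle: the one point requiring real care is the combinatorial bookkeeping that makes $P\mapsto S_{P}$ an honest bijection onto \emph{all} subsets of $V(D)$ (so that the reindexing legitimately produces a sum over $2^{V(D)}$), together with the indexing adjustments for loops and for the vertex incident to $e_{1}$ that are flagged parenthetically in the preceding discussion. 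All of the substantive content --- the equality between a circuit count and a binary nullity --- is already supplied by Theorem~\ref{cle}.
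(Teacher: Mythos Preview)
Your argument is correct and matches the paper's own proof essentially line for line: establish the bijection $P\leftrightarrow S_{P}$ between $\mathcal{P}(D)$ and subsets of $V(D)$, apply Theorem~\ref{cle} to get $|P|=\nu(I_{S_{P}}(D,C))+1$, and sum. The paper states this in a single sentence, while you supply additional justification for the bijection and the disjointness of the transpositions, but the strategy is identical.
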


\begin{proof}
$P\leftrightarrow S_{P}$ defines a one-to-one correspondence between elements
of $\mathcal{P}(D)$ and subsets of $V(D)$, and the Cohn-Lempel equality tells
us that for each $P\in\mathcal{P}(D)$, $\left\vert P\right\vert =\nu
(I_{\pi_{P}})+1=\nu(I_{S_{P}}(D,C))+1$.
\end{proof}

\medskip

Arratia, Bollob\'{a}s, and Sorkin introduced the \textit{interlace
polynomials} of looped, undirected graphs in \cite{A1, A2, A}. These
invariants were first defined recursively, but soon it was shown that they are
also given by formulas involving matrix nullities \cite{AH, A}. Given an
undirected graph $G$ with $V(G)=\{v_{1},...,v_{n}\}$, let $\mathcal{A}(G)$ be
the $n\times n$ matrix with entries in $GF(2)$ given by $a_{ii}=1$ if and only
if $v_{i}$ is looped, and for $i\neq j$, $a_{ij}=1$ if and only if $v_{i}$ and
$v_{j}$ are adjacent. For $S\subseteq V(G)$ let $\mathcal{A}(G)_{S}$ denote
the submatrix of $\mathcal{A}(G)$ consisting of the rows and columns
corresponding to elements of $S$; equivalently $\mathcal{A}(G)_{S}%
=\mathcal{A}(G[S])$, where $G[S]$ denotes the subgraph of $G$ induced by $S$ .

\medskip

\begin{definition}
\label{int}The vertex-nullity interlace polynomial of $G$ is
\[
q_{N}(G)=\sum_{S\subseteq V(G)}(y-1)^{\nu(\mathcal{A}(G)_{S})}%
\]
and the (two-variable) interlace polynomial of $G$ is
\[
q(G)=\sum_{S\subseteq V(G)}(x-1)^{\left\vert S\right\vert -\nu(\mathcal{A}%
(G)_{S})}(y-1)^{\nu(\mathcal{A}(G)_{S})}.
\]
\end{definition}

Definition \ref{int} may be applied to graphs with parallel edges or parallel
loops, but parallels do not affect $\mathcal{A}(G)$ or the interlace polynomials.

\medskip

Suppose $D$ is a connected 2-in, 2-out digraph with an Euler circuit $C$, and
$H$ is the \textit{interlace graph} of $D$ with respect to $C$, \textit{i.e.},
the undirected graph with $V(H)=V(D)$ and $\mathcal{A}(H)=I(D,C)$. Theorem 24
of \cite{A2} states that $q_{N}(H)$ is essentially the same as the generating
function for partitions of $E(D)$ into directed circuits. The proof given
there involves the recursive definition of $q_{N}$, but once it is recognized
that $q_{N}$ can also be given by Definition \ref{int}, it becomes clear that
the relationship between $q_{N}(H)$ and directed circuit partitions of $D$ is
equivalent to Corollary \ref{clec} above.

\medskip

The Kauffman bracket polynomial of a knot or link diagram (and other link
invariants too) can be given by a sum whose terms are obtained by counting
circuits in circuit partitions. As Arratia, Bollob\'{a}s, and Sorkin observed
in \cite{A2}, this leads directly to a relationship between the Kauffman
bracket and the vertex-nullity interlace polynomial. The fact that the
Kauffman bracket can be described by formulas involving $GF(2)$-nullity has
also been noted by knot theorists; Soboleva \cite{So} seems to have been the
first to explicitly cite the Cohn-Lempel equality. Some of the formulas used
by knot theorists resemble the Cohn-Lempel equality or Corollary \ref{clec}
without being quite the same. For instance, Zulli \cite{Z} counted circuits
using a formula that involves the $GF(2)$-nullities of matrices that may have
nonzero entries on the diagonal, and are all $n\times n$. More recently, Lando
\cite{L} and Mellor \cite{Me} used a formula that includes both the
Cohn-Lempel equality and Zulli's formula. As is natural in the literature of
knot theory, the discussions in these references are essentially topological
-- the arguments of Lando and Zulli involve the homology of surfaces, and
Mellor and Soboleva are concerned with weight systems for link invariants --
and they focus (implicitly or explicitly)\ on connected, planar digraphs.

\medskip

In this note we present a combinatorial proof of an extended version of the
Cohn-Lempel equality that applies to arbitrary circuit partitions in arbitrary
4-regular graphs. This extended Cohn-Lempel equality does not require that the
4-regular graph in question be connected, directed or planar, and it includes
the various formulas just mentioned. The greater generality of the extended
Cohn-Lempel equality is not only pleasing but also useful: it is a crucial
part of an interlacement-based analysis of Kauffman's bracket for virtual
links \cite{Kv} developed by Zulli and the present author \cite{T, TZ}, and as
we see below it allows us to extend the relationship between circuit
partitions and interlace polynomials to include interlace graphs that have had
some loops attached.

\medskip

Before stating the extended Cohn-Lempel equality we take a moment to establish
notation and terminology. Suppose $G$ is an undirected 4-regular graph. If $G$
is connected it must have an Euler circuit $C$. Choose one of the two
orientations of $C$, let $D$ be the 2-in, 2-out digraph obtained from $G$ by
directing all edges according to that orientation, and let $I(D,C)$ be the
interlace matrix of $D$ with respect to $C$. If $G$ is not connected then let
$C$ be a set of Euler circuits, one in each of the $c(G)$ connected components
of $G$, and let $D$ be a 2-in, 2-out digraph resulting from one of the
$2^{c(G)}$ possible choices of orientations for the circuits in $C$. The
interlace matrix $I(D,C)$ then consists of $c(G)$ diagonal blocks
corresponding to the interlace matrices of the components of $G$ with respect
to the circuits of $C$; the entries outside these diagonal blocks are all 0.

\medskip

Let $P$ be a partition of $E(G)$ into undirected circuits. Suppose $v_{i}\in
V(G)$, and consider an edge $e$ that is directed toward $v_{i}$ in $D$. Some
circuit of $P$ must contain $e$. If we follow this circuit through $v_{i}$
after traversing $e$, then there are three ways we might leave $v_{i}$: along
the edge $C$ uses to leave $v_{i}$ after arriving along $e$, along the other
edge directed away from $v_{i}$ in $D$, or along the remaining edge directed
toward $v_{i}$ in $D$. We say $P$ \textit{follows }$C$ \textit{through}
$v_{i}$ in the first case, $P$ is \textit{orientation-consistent at} $v_{i}%
$\textit{\ but does not follow} $C$ in the second case, and $P$ is
\textit{orientation-inconsistent at} $v_{i}$ in the third case. Changing the
choice of $e$ or the orientations of the circuit(s) in $C$ does not affect the
descriptions of the three cases. (\textit{N.B}. In order to provide
well-defined descriptions of the three possibilities at looped vertices we
should actually refer to \textit{half-edges}; we leave this sharpening of
terminology to the reader.) A matrix $I_{P}=I_{P}(D,C)$ is obtained from
$I(D,C)$ as follows. If $P$ follows $C$ through $v_{i}$ then the row and
column of $I(D,C)$ corresponding to $v_{i}$ are removed; if $P$ is
orientation-consistent at $v_{i}$ but does not follow $C$ then the row and
column of $I(D,C)$ corresponding to $v_{i}$ are retained without change; and
if $P$ is orientation-inconsistent at $v_{i}$ then the row and column of
$I(D,C)$ corresponding to $v_{i}$ are retained with one change: their common
diagonal entry is changed from $0$ to $1$.

\begin{theorem}
\label{ecle}(Extended Cohn-Lempel equality) If $G$ is an undirected, 4-regular
graph with $c(G)$ components and $P$ is a partition of $E(G)$ into undirected
circuits then
\[
\left|  P\right|  =\nu(I_{P})+c(G).
\]
\end{theorem}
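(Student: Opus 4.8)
The plan is to reduce the general case to the classical Cohn-Lempel equality (Theorem \ref{cle}) by a sequence of simplifications. First I would handle the disconnected case: if $G$ has components $G_1,\dots,G_{c(G)}$, then a circuit partition $P$ restricts to a circuit partition $P_j$ of each $G_j$, and $|P| = \sum_j |P_j|$ while $I_P$ is the block-diagonal matrix with blocks $I_{P_j}$, so $\nu(I_P) = \sum_j \nu(I_{P_j})$. Hence it suffices to prove $|P_j| = \nu(I_{P_j}) + 1$ for each connected component, and I may assume $G$ is connected for the rest of the argument.

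Next I would dispose of the vertices where $P$ follows $C$: these contribute neither to $S_P$ (in the old sense) nor a retained row/column of $I_P$, and "smoothing" $C$ and $P$ compatibly at such a vertex produces a new connected 4-regular graph $G'$ with one fewer vertex, an Euler circuit $C'$, and a partition $P'$ with $|P'| = |P|$ and $I_{P'} = I_{P}$; so WLOG $P$ follows $C$ at no vertex. Now the real content: at every remaining vertex $v_i$, $P$ is either orientation-consistent-but-not-following-$C$, or orientation-inconsistent. The orientation-consistent vertices are exactly the classical situation — $\pi_P$ acts there by the transposition $\sigma_i = (ab)$ swapping the two out-edges — and if there were no orientation-inconsistent vertices the result would be immediate from Theorem \ref{cle} applied to $\pi_P = \sigma\sigma_{i_1}\cdots\sigma_{i_k}$, since then $I_P = I_{\pi_P}$.

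The main obstacle is therefore the orientation-inconsistent vertices, where the local behaviour of $P$ is \emph{not} a transposition composed with $\sigma$ but rather reverses a stretch of the Euler circuit. I would handle each such vertex $v_i$ by the standard device of reversing the orientation of one of the two $D$-arcs through $v_i$ — equivalently, reversing the orientation of the segment of $C$ between the two visits to $v_i$. This turns $v_i$ into an orientation-consistent-but-not-following-$C$ vertex of a modified digraph $\tilde D$ relative to a modified (not-necessarily-Euler) closed walk; one then checks two things: (i) the number of circuits $|P|$ is unchanged, because reversing a segment's orientation does not change the underlying undirected partition; and (ii) the effect on the interlace matrix is exactly to toggle the diagonal entry at $v_i$ and to toggle the off-diagonal entries $(i,j)$ for precisely those $v_j$ interlaced with $v_i$ — which is the well-known local-complementation/pivot formula for how interlacement changes under reversing an interval, and is exactly the modification encoded in the definition of $I_P$. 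Iterating over all orientation-inconsistent vertices converts the problem into one with only orientation-consistent vertices, where Theorem \ref{cle} applies and yields $|P| = \nu(\text{(modified matrix)}) + 1 = \nu(I_P) + 1$. I expect the bookkeeping in step (ii) — verifying that the interlace matrix transforms exactly as the definition of $I_P$ prescribes, including at looped vertices where one must work with half-edges — to be the delicate part, and I would isolate it as a lemma about how $I(D,C)$ changes when a segment of $C$ is reversed; alternatively, one can bypass orientation entirely and argue directly with a rank/nullity computation on the $GF(2)$ edge-circuit incidence structure, using that $\nu(I_P)$ counts independent cycles in a suitable "transition" chain complex, but the orientation-reversal route is more transparent and closer to the classical proof.
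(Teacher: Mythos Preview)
Your overall plan---reduce to the connected case, quote Theorem~\ref{cle} once every vertex is orientation-consistent, and get rid of orientation-inconsistent vertices by reversing a segment of $C$---is exactly the route the paper takes. The gap is in your step~(ii): the effect of the segment reversal is not what you describe. If $C=aC_{1}aC_{2}a$ and you pass to $C\ast a=aC_{1}a\bar C_{2}a$, then $P$ \emph{follows} $C\ast a$ through $a$ (it does not become ``orientation-consistent but not following''), so the $a$-row and $a$-column are \emph{deleted}, not merely toggled on the diagonal. Moreover the change is not confined to the $a$-row and $a$-column: every vertex $v$ interlaced with $a$ has its orientation-consistency status flipped (hence its diagonal entry flips), and any two vertices both interlaced with $a$ have their mutual interlacement flipped as well. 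In block form, writing the $a$-neighbours first,
\[
I_{P}(D,C)=\left(\begin{array}{ccc}1&\mathbf 1&\mathbf 0\\ \mathbf 1&M_{11}&M_{12}\\ \mathbf 0&M_{21}&M_{22}\end{array}\right)
\quad\text{becomes}\quad
I_{P}(D\ast a,C\ast a)=\left(\begin{array}{cc}\bar M_{11}&M_{12}\\ M_{21}&M_{22}\end{array}\right),
\]
and equality of nullities follows by adding the first row of the left-hand matrix to each $M_{11}$-row. A consequence is that you cannot ``iterate over all orientation-inconsistent vertices'' and land cleanly in the classical case, because the reversal at $a$ may \emph{create} new orientation-inconsistent vertices among the neighbours of $a$; the correct induction is on the size of $I_{P}$, which drops by one at each reversal. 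With the transformation law corrected and the induction reorganised this way, your outline coincides with the paper's proof.
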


As an example of the extended Cohn-Lempel equality consider the complete graph
$K_{5}$, with vertices denoted 1, 2, 3, 4 and 5. Let $D$ be the directed
version of $K_{5}$ with edge-directions given by the Euler circuit $C=$
1234513524. If $P$ follows $C$ at vertex 1, is orientation-inconsistent at
vertices 2 and 3, and is orientation-consistent but does not follow $C$ at
vertices 4 and 5 then
\[
\nu(I_{P}(D,C))=\nu\left(
\begin{array}
[c]{cccc}%
1 & 0 & 1 & 0\\
0 & 1 & 1 & 1\\
1 & 1 & 0 & 0\\
0 & 1 & 0 & 0
\end{array}
\right)  =0,
\]
so $\left|  P\right|  =1$. The one circuit in $P$ is the Euler circuit
1254231534. The partition $P^{\prime}$ that disagrees with $P$ only by
following $C$ at 3 corresponds to the matrix $I_{P^{\prime}}(D,C)$ obtained by
removing the second row and column of $I_{P}(D,C)$, so $\nu(I_{P^{\prime}%
}(D,C))=1$. $P^{\prime}$ contains the circuits 1254234 and 135.

\medskip

The extended Cohn-Lempel equality implies that the relationship between
interlace polynomials and directed circuit partitions extends to looped
interlace graphs.

\begin{corollary}
\label{interlace}Suppose $C$, $D$ and $G$ are as in Theorem \ref{ecle}, and
$H$ is obtained from the interlace graph of $D$ with respect to $C$ by
attaching loops at some vertices. Then
\[
q_{N}(H)=\sum_{S\subseteq V(H)}(y-1)^{\left\vert P_{S}\right\vert -c(G)},
\]
where $P_{S}$ is the undirected circuit partition that follows $C$ at each
vertex $v\not \in S$, is orientation-inconsistent at each looped vertex $v\in
S$, and is orientation-consistent but does not follow $C$ at each unlooped
vertex $v\in S$. Also, the two-variable interlace polynomial of $H$ is
\[
q(H)=\sum_{S\subseteq V(H)}(x-1)^{\left\vert S\right\vert -\left\vert
P_{S}\right\vert +c(G)}(y-1)^{\left\vert P_{S}\right\vert -c(G)}.
\]
\end{corollary}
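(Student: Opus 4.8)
The plan is to reduce both displayed identities to the single matrix identity $\mathcal{A}(H)_S = I_{P_S}(D,C)$ and then combine that with Definition \ref{int} and Theorem \ref{ecle}; no new combinatorics beyond the extended Cohn-Lempel equality is needed.

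First I would observe that, since $H$ is obtained from the interlace graph of $D$ with respect to $C$ by attaching loops at the vertices in some set $L \subseteq V(H) = V(D)$, the adjacency matrix $\mathcal{A}(H)$ is precisely $I(D,C)$ with the diagonal entries indexed by $L$ switched from $0$ to $1$, all other entries being unchanged. Hence for any $S \subseteq V(H)$ the induced submatrix $\mathcal{A}(H)_S = \mathcal{A}(H[S])$ is the principal submatrix of $I(D,C)$ on the rows and columns in $S$, modified by changing to $1$ the diagonal entries indexed by $S \cap L$.

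Next I would compare this with the recipe for $I_{P_S}(D,C)$ given just before Theorem \ref{ecle}. By construction $P_S$ follows $C$ through each $v \notin S$, so those rows and columns of $I(D,C)$ are deleted; it is orientation-consistent but does not follow $C$ at each unlooped $v \in S$, so those rows and columns are kept unchanged; and it is orientation-inconsistent at each looped $v \in S$, so those rows and columns are kept with their common diagonal entry changed from $0$ to $1$. This is exactly the matrix described in the previous paragraph, so $\mathcal{A}(H)_S = I_{P_S}(D,C)$ and in particular $\nu(\mathcal{A}(H)_S) = \nu(I_{P_S})$. (A preliminary remark worth including is that $P_S$ is well defined: at each vertex of a $4$-regular graph there are exactly three ways to pair the four incident half-edges, and $C$ together with the membership $v \in S$ and the loop status of $v$ singles out one of them.)

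Finally I would invoke Theorem \ref{ecle}, which gives $\nu(I_{P_S}) = |P_S| - c(G)$, whence $\nu(\mathcal{A}(H)_S) = |P_S| - c(G)$ for every $S \subseteq V(H)$. Substituting this into the two formulas of Definition \ref{int} applied to $H$ yields
\[
q_N(H) = \sum_{S \subseteq V(H)} (y-1)^{\nu(\mathcal{A}(H)_S)} = \sum_{S \subseteq V(H)} (y-1)^{|P_S| - c(G)}
\]
and, in the same way, $q(H) = \sum_{S \subseteq V(H)} (x-1)^{|S| - |P_S| + c(G)} (y-1)^{|P_S| - c(G)}$, as claimed. I do not expect a genuine obstacle here: the only delicate point is the purely bookkeeping step of matching the three local behaviors of $P_S$ at a vertex (follows $C$ / orientation-consistent / orientation-inconsistent) with the three operations on the corresponding row and column of $I(D,C)$ (delete / keep / keep with the diagonal flipped) and with the presence or absence of a loop at that vertex in $H$; once the conventions are aligned, this matching is immediate.
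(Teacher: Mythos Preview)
Your proposal is correct and matches the paper's approach: the paper states Corollary \ref{interlace} as an immediate consequence of Theorem \ref{ecle} without writing out a separate proof, and your argument makes explicit exactly the intended identification $\mathcal{A}(H)_S = I_{P_S}(D,C)$ followed by substitution into Definition \ref{int}.
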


In the balance of the paper we prove Theorem \ref{ecle}, derive an analogue of
Corollary \ref{interlace} for the multivariate interlace polynomial of
Courcelle \cite{C}, and comment briefly on related results of Beck and Moran
\cite{Be, BM}, Macris and Pul\'{e} \cite{MP}, Lauri \cite{Lau} and Jonsson
\cite{Jo}. Before proceeding we should express our gratitude to D. P. Ilyutko
and L. Zulli, whose discussions of \cite{IM} and \cite{Z} inspired this note.
We are also grateful to Lafayette College for its support.

\section{Proof of the extended Cohn-Lempel equality}

The equality is proven under the assumption that $G$ is connected; the general
case follows as the contributions from different connected components are
simply added together.

\medskip

We begin with a special case: every entry of $I_{P}$ is 0. This case falls
under the original Cohn-Lempel equality but we provide an argument anyway, for
the sake of completeness. If $I_{P}$ is the empty matrix, then $P=\{C\}$ and
the equality is satisfied. If $I_{P}$ is the $1\times1$ matrix $(0)$ and the
one entry corresponds to $a$, let $aC_{1}a$ and $aC_{2}a$ be circuits with
$C=aC_{1}aC_{2}a$; then $P$ consists of two separate circuits $aC_{1}a$ and
$aC_{2}a$, so the equality is satisfied. Proceeding by induction on the size
of $I_{P}=\mathbf{0}$, let $S_{P}$ be the set of vertices at which $P$ does
not follow $C$. Choose $a\in S_{P}$ so that $C=aC_{1}aC_{2}a$ with $C_{1}$ as
short as possible. Then no element of $S_{P} $ appears on $C_{1}$, for a
vertex that appears only once is interlaced with $a$ (violating $I_{P}%
=\mathbf{0}$) and a vertex $b$ that appears twice has $C=bC_{1}^{\prime}%
bC_{2}^{\prime}b$ with $C_{1}^{\prime} $ shorter than $C_{1}$ (violating the
choice of $a$). Let $Q$ be the circuit partition that disagrees with $P$ only
by following $C$ at $a$. Then $I_{Q}$ is smaller than $I_{P}$, so the
inductive hypothesis tells us that $\left\vert Q\right\vert =\nu(I_{Q}%
)+1=\nu(I_{P})$. As $C=aC_{1}aC_{2}a$ and both $P$ and $Q$ follow $C$ at every
vertex of $C_{1}$, it is clear that $a$ appears on two circuits of $P$
($aC_{1}a$ and another), these two circuits are united in $Q$, and the other
elements of $P$ and $Q$ coincide. Hence $\left\vert P\right\vert =\left\vert
Q\right\vert +1=\nu(I_{P})+1$.

\medskip

If $I_{P}$ is the $1\times1$ matrix $(1)$ with a single entry corresponding to
$a$, and $C=aC_{1}aC_{2}a$, then the equality is satisfied because $P$
contains only the Euler circuit $aC_{1}a\bar{C}_{2}a$. Here $\bar{C}_{2}$ is
the reverse of $C_{2}$ and the Euler circuit $aC_{1}a\bar{C}_{2}a$ is the
$\kappa$\textit{-transform} of $C$ at $a$, denoted $C\ast a$ \cite{Bc, K}.

\medskip

The argument proceeds by induction on the size of $I_{P}\neq\mathbf{0}$.
Suppose $P$ is orientation-inconsistent with $C$ at a vertex $a$, and let
$C=aC_{1}aC_{2}a$. Then $C\ast a=aC_{1}a\bar{C}_{2}a$ is also an Euler circuit
of $G$, and $P$ follows $C\ast a$ through $a$. If $v\neq a$ is a vertex that
appears on both $C_{1}$ and $C_{2}$ then either $P$ follows both $C$ and
$C\ast a$ through $v$, or else $P$ is orientation-inconsistent with respect to
one of $C,C\ast a$ at $v$ and orientation-consistent with the other of
$C,C\ast a$ without following it through $v$. If $v\neq a$ is a vertex that
appears on only one of $C_{1},C_{2}$ then $P$ has the same status with respect
to $C$ and $C\ast a$ at $v$. If $a\not \in\{v,w\}$ and $v $ and $w$ both
appear on $C_{1}$ and $C_{2}$, then the interlacement of $v$ and $w$ with
respect to $C\ast a$ is the opposite of their interlacement with respect to
$C$. On the other hand, if $a\not \in\{v,w\}$ and either $v$ or $w$ doesn't
appear on both $C_{1}$ and $C_{2}$ then their interlacement with respect to
$C\ast a$ is the same as their interlacement with respect to $C$. In sum, if
$D\ast a$ denotes the digraph on $G$ consistent with $C\ast a $ then
\[
I_{P}(D,C)=\left(
\begin{array}
[c]{ccc}%
1 & \mathbf{1} & \mathbf{0}\\
\mathbf{1} & M_{11} & M_{12}\\
\mathbf{0} & M_{21} & M_{22}%
\end{array}
\right)  \qquad\mathrm{and}\qquad I_{P}(D\ast a,C\ast a)=\left(
\begin{array}
[c]{cc}%
\bar{M}_{11} & M_{12}\\
M_{21} & M_{22}%
\end{array}
\right)
\]
for appropriate submatrices $M_{ij}$; here $\bar{M}_{11}$ differs from
$M_{11}$ in every entry. Adding the first row of $I_{P}(D,C)$ to each row
involved in $M_{11}$ and $M_{12}$, we see that $I_{P}(D\ast a,C\ast a)$ and
$I_{P}(D,C)$ have the same nullity. As $I_{P}(D\ast a,C\ast a)$ is smaller
than $I_{P}(D,C)$, induction tells us that $\left\vert P\right\vert =\nu
(I_{P}(D,C))+1$.

\medskip

Suppose now that there is no vertex at which $P$ is orientation-inconsistent;
this case too falls under the original Cohn-Lempel equality. As the equality
has already been verified in the case $I_{P}=\mathbf{0}$, we presume that
there are two interlaced vertices $a$ and $b$ such that $P$ follows $C$
neither at $a$ nor at $b$. Let $C=aC_{1}bC_{2}aC_{3}bC_{4}a$, and $C\ast a\ast
b\ast a=aC_{1}bC_{4}aC_{3}bC_{2}a$; then $P$ follows $C\ast a\ast b\ast a$
through both $a$ and $b$. A case-by-case analysis shows that
\begin{gather*}
I_{P}(D,C)=\left(
\begin{array}
[c]{cccccc}%
\mathbf{0} & \mathbf{1} & \mathbf{1} & \mathbf{1} & \mathbf{0} & \mathbf{0}\\
\mathbf{1} & \mathbf{0} & \mathbf{1} & \mathbf{0} & \mathbf{1} & \mathbf{0}\\
\mathbf{1} & \mathbf{1} & M_{11} & M_{12} & M_{13} & M_{14}\\
\mathbf{1} & \mathbf{0} & M_{21} & M_{22} & M_{23} & M_{24}\\
\mathbf{0} & \mathbf{1} & M_{31} & M_{32} & M_{33} & M_{34}\\
\mathbf{0} & \mathbf{0} & M_{41} & M_{42} & M_{43} & M_{44}%
\end{array}
\right) \\
~\\
\mathrm{and}\qquad I_{P}(D,C\ast a\ast b\ast a)=\left(
\begin{array}
[c]{cccc}%
M_{11} & \bar{M}_{12} & \bar{M}_{13} & M_{14}\\
\bar{M}_{21} & M_{22} & \bar{M}_{23} & M_{24}\\
\bar{M}_{31} & \bar{M}_{32} & M_{33} & M_{34}\\
M_{41} & M_{42} & M_{43} & M_{44}%
\end{array}
\right)
\end{gather*}
for appropriate submatrices. For instance, suppose $v_{i}$ appears in $C_{2}$
and $C_{3}$ and $v_{j}$ appears in $C_{2}$ and $C_{4}$; then $v_{i}$ is
interlaced with $v_{j}$ with respect to $C$ if and only if $v_{i}$ precedes
$v_{j}$ in $C_{2}$, whereas $v_{i}$ is interlaced with $v_{j}$ with respect to
$C\ast a\ast b\ast a$ if and only if $v_{i}$ follows $v_{j}$ in $C_{2}$.
Consequently the entry of $I_{P}(D,C)$ corresponding to $v_{i}$ and $v_{j}$,
which falls in $M_{13}$ and $M_{31}$, is the opposite of the corresponding
entry of $I_{P}(D,C\ast a\ast b\ast a)$. Using elementary row operations we
see that $I_{P}(D,C\ast a\ast b\ast a)$ and $I_{P}(D,C)$ have the same
nullity; as $I_{P}(D,C\ast a\ast b\ast a)$ is smaller the inductive hypothesis
tells us that $\left\vert P\right\vert =\nu(I_{P}(D,C))+1$.

\medskip

Readers familiar with \cite{AH, A} will recognize the matrix reductions in the
argument. The same reductions are used to deduce the recursive properties of
the interlace polynomials from their matrix formulas.

\section{The multivariate interlace polynomial}

Courcelle \cite{C} introduced a multivariate interlace polynomial of a looped
graph $H$, given by
\[
C(H)=\sum_{\substack{A,B\subseteq V(H) \\A\cap B=\emptyset}}\left(
\prod_{a\in A}x_{a}\right)  \left(  \prod_{b\in B}y_{b}\right)  u^{\left\vert
A\cup B\right\vert -\nu((H\nabla B)[A\cup B])}v^{\nu((H\nabla B)[A\cup B])}%
\]
where $H\nabla B$ denotes the graph obtained from $H$ by toggling loops at the
vertices in $B$ and $u$, $v$, and the various $x_{a}$ and $y_{b}$ are
independent indeterminates. The contribution of each $A,B$ pair to $C(H)$ is
distinguished by the corresponding indeterminates. Consequently if $D$ is a
2-in, 2-out digraph and $H$ is a looped version of the interlace graph of $D$
with respect to a set $C$ of directed Euler circuits for the components of $D
$, then the extended Cohn-Lempel equality tells us that $C(H)$ is essentially
the same as the list of all partitions of $E(D)$ into undirected circuits,
with each partition listed along with its cardinality. That is, $C(H)$ is
essentially the \textit{transition polynomial} studied by Jaeger \cite{J} and
Ellis-Monaghan and Sarmiento \cite{E}.

\begin{corollary}
Suppose $D$ is a 2-in, 2-out digraph, $C$ contains a directed Euler circuit
for each component of $D$, and $H$ is obtained from the interlace graph of $D
$ with respect to $C$ by attaching loops at some vertices. Then the
multivariate interlace polynomial of $H$ is
\[
C(H)=\sum_{\substack{A,B\subseteq V(H) \\A\cap B=\emptyset}}\left(
\prod_{a\in A}x_{a}u\right)  \left(  \prod_{b\in B}y_{b}u\right)  (\frac{v}%
{u})^{\left\vert P_{A,B}\right\vert -c(D)}%
\]
where $P_{A,B}$ is the undirected circuit partition that follows $C$ at
vertices not in $A\cup B$, is orientation-inconsistent at looped vertices in
$A$ and unlooped vertices in $B$, and is orientation-consistent but does not
follow $C$ at unlooped vertices in $A$ and looped vertices in $B$.
\end{corollary}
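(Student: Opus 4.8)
The plan is to reduce the statement to the extended Cohn-Lempel equality (Theorem~\ref{ecle}) by carefully matching Courcelle's summation index to the circuit-partition data. Courcelle's polynomial $C(H)$ sums over \emph{disjoint} pairs $A,B\subseteq V(H)$; for each such pair the exponent of $v$ is $\nu((H\nabla B)[A\cup B])$ and the exponent of $u$ is $|A\cup B|-\nu((H\nabla B)[A\cup B])$. So I will show that for each disjoint pair $(A,B)$ the matrix $(H\nabla B)[A\cup B]$ coincides (up to reindexing) with the matrix $I_{P_{A,B}}$ produced by the recipe in Theorem~\ref{ecle}, applied to the partition $P_{A,B}$ described in the statement. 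Granting that, the extended Cohn-Lempel equality gives $\nu((H\nabla B)[A\cup B]) = \nu(I_{P_{A,B}}) = |P_{A,B}| - c(D)$, and then rewriting $u^{|A\cup B|-\nu}v^{\nu} = u^{|A\cup B|}(v/u)^{\nu}$ and distributing the $u^{|A\cup B|}$ across the two products as $\prod_{a\in A}(x_a u)\prod_{b\in B}(y_b u)$ yields exactly the asserted formula.

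The substantive step is the matrix identification, and it amounts to a small bookkeeping argument about what ``toggling loops at $B$'' and ``inducing on $A\cup B$'' do versus what the $I_P$ recipe does. Recall $H$ is the interlace graph of $D$ (with respect to $C$) with some extra loops attached; write $L\subseteq V(H)$ for the set of vertices carrying one of these attached loops, so $\mathcal{A}(H) = I(D,C) + E_L$, where $E_L$ is the diagonal 0/1 matrix with ones exactly at $L$. Restricting to $S:=A\cup B$ and toggling loops at $B$ produces a diagonal entry at a vertex $v\in S$ equal to $\mathbf{1}_{v\in L} + \mathbf{1}_{v\in B} \pmod 2$, while the off-diagonal entries are just those of $I(D,C)$ on $S$. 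Now compare with $I_{P_{A,B}}$: vertices outside $S$ are dropped (they are exactly the vertices where $P_{A,B}$ follows $C$, which is the ``remove row and column'' case); a vertex $v\in S$ contributes diagonal entry $1$ precisely when $P_{A,B}$ is orientation-inconsistent there, which by the definition of $P_{A,B}$ happens exactly for $v\in A\cap L$ or $v\in B\setminus L$ — and one checks that $\mathbf{1}_{v\in A\cap L} + \mathbf{1}_{v\in B\setminus L} \equiv \mathbf{1}_{v\in L} + \mathbf{1}_{v\in B} \pmod 2$ for every $v\in S = A\sqcup B$ (split into the four cases according to membership in $A$ vs.\ $B$ and in $L$ vs.\ not). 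The off-diagonal entries agree because both are the interlacement entries of $I(D,C)$ restricted to $S$. Hence $(H\nabla B)[A\cup B]$ and $I_{P_{A,B}}(D,C)$ are the same matrix, as needed. The bijection $(A,B)\leftrightarrow$ (the triple of transition types at the vertices) is evidently onto and one-to-one, since specifying, for each vertex, whether it lies in $A$, in $B$, or in neither — together with its loop status, which is fixed — determines and is determined by the transition type of $P_{A,B}$ at that vertex.

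I expect the only real obstacle to be the parity check in that four-case split, together with making sure the ``orientation-inconsistent'' versus ``orientation-consistent-but-not-following-$C$'' dichotomy in the statement's description of $P_{A,B}$ is transcribed correctly against the diagonal-entry rule in Theorem~\ref{ecle}; once that is pinned down the rest is a formal substitution. I would also note in passing that, since parallels do not affect $\mathcal{A}$ or the interlace polynomials and the extended equality is summed blockwise over components with $c(D) = c(G)$ (where $G$ is the underlying $4$-regular graph), no connectivity or planarity hypothesis is needed — the disconnected case is handled exactly as in the proof of Theorem~\ref{ecle}, by adding the contributions of the components.
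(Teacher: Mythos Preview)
Your proposal is correct and follows essentially the same approach as the paper: rewrite $u^{|A\cup B|-\nu}v^{\nu}=u^{|A\cup B|}(v/u)^{\nu}$, distribute $u^{|A\cup B|}$ into the two products, and invoke the extended Cohn-Lempel equality to replace $\nu((H\nabla B)[A\cup B])$ by $|P_{A,B}|-c(D)$. The paper's proof is terser---it records only the reformulated sum and leaves the matrix identification $\mathcal{A}((H\nabla B)[A\cup B])=I_{P_{A,B}}(D,C)$ implicit---whereas you spell out the four-case parity check on the diagonal, which is exactly the content needed to justify that identification.
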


\begin{proof}
Reformulating the definition,
\[
C(H)=\sum_{\substack{A,B\subseteq V(H) \\A\cap B=\emptyset}}\left(
\prod_{a\in A}x_{a}u\right)  \left(  \prod_{b\in B}y_{b}u\right)  (\frac{v}%
{u})^{\nu((H\nabla B)[A\cup B])}.
\]
\end{proof}

\section{Two remarks}

1. The original form of the Cohn-Lempel equality is not completely general.
For instance, if $n\geq3$ then the identity permutation is not $\sigma
\sigma_{1}...\sigma_{k}$ for any disjoint transpositions $\sigma
_{1},...,\sigma_{k}$. Beck and Moran \cite{Be, BM} extended the Cohn-Lempel
equality to arbitrary permutations by removing the requirement that the
$\sigma_{i}$ be disjoint. Theorem \ref{ecle} may also be applied to arbitrary
permutations. If $\pi$ is a permutation of $\{1,...,2n\}$ choose any partition
of $\{1,...,2n\}$ into pairs, and construct the directed graph $D$ whose $n$
vertices correspond to these pairs and whose $2n$ edges correspond to
$1,...,2n$, with the edge corresponding to $i$ directed from the vertex
corresponding to the pair containing $i$ to the vertex corresponding to the
pair containing $i\pi$. If $\pi^{\prime}$ is a permutation of $\{1,...,2n-1\}$%
, first replace it with the permutation $\pi$ of $\{1,...,2n\}$ that has
$i\pi=i\pi^{\prime}$ for $i<2n-1$, $(2n-1)\pi=2n$, and $(2n)\pi=(2n-1)\pi
^{\prime}$; then construct $D$ as before.

\bigskip

2. Macris and Pul\'{e} \cite{MP}, Lauri \cite{Lau} and Jonsson \cite{Jo}
introduced skew-symmetric integer matrices that reduce (mod 2) to $I_{\pi}$
and whose nullity over the rationals is $\nu(I_{\pi})$. In general, however,
there is no skew-symmetric version of $I_{P}(D,C)$ whose $\mathbb{Q} $-nullity
can be used in Theorem \ref{ecle}. For example, consider the directed graph
$D$ with vertices denoted $1,2,3$ (mod 3) in which there are two edges
directed from vertex $i$ to vertex $i+1$ for each $i$. $E(D)$ has a partition
$P$ containing three undirected circuits; each element of $P$ consists of two
parallel edges. $I_{P}(D,C)$ is the $3\times3$ binary matrix with every entry
$1$, and $\nu(I_{P}(D,C))=2$ in accordance with the extended Cohn-Lempel
equality. However a skew-symmetric $3\times3$ matrix of $\mathbb{Q}$-nullity 2
must have at least five of its nine entries equal to 0.

\section{Dedication}

T. H. Brylawski's work has influenced a generation of researchers studying
matroids and the Tutte polynomial. My training in knot theory was focused on
algebraic topology rather than combinatorics, so I particularly appreciated
the clarity and thoroughness of his expository writing. No less important was
his professional hospitality, which made me feel welcome in a new field. This
note is gratefully dedicated to his memory.

\medskip

\end{document}